\newcommand{\Aut}{\mathrm{Aut\mkern 2mu}}
\newcommand{\Hol}{\mathsf{Hol}}
\newcommand{\F}{\mathcal {F}}
\newcommand{\A}{\mathcal A}
\newcommand{\Z}{\mathcal Z}
\newcommand{\IZ}{\mathbb Z}
\newtheorem{theorem}{Theorem}[section]
\newtheorem{proposition}[theorem]{Proposition}
\newtheorem{corollary}[theorem]{Corollary}
\newtheorem{problem}[theorem]{Problem}
\title{Automorphism groups of superextensions of groups} % A title of the paper.
\author{Taras Banakh and Volodymyr Gavrylkiv}
\address{T.Banakh: Ivan Franko National University of Lviv (Ukraine), and
Institute of Mathematics, Jan Kochanowski University in
Kielce (Poland)}
\email{t.o.banakh@gmail.com}
\address{V.Gavrylkiv: Faculty of Mathematics and Computer Science,
Vasyl Stefanyk Precarpathian National University,
Shevchenko Street 57, Ivano-Frankivsk, 76025, Ukraine}
\email{vgavrylkiv@gmail.com}
\subjclass{20D45, 20M15, 20B25}
\keywords{group, semigroup, maximal linked family, superextension, automorphism group}
\begin{document}

\begin{abstract}
A family $\mathcal L$ of
subsets  of a set $X$ is called {\em linked} if $A\cap
B\ne\emptyset$ for all $A,B\in\mathcal L$.  A linked family
$\mathcal M$ is  {\em maximal linked} if
$\mathcal M$ coincides with each linked family $\mathcal L$ on
$X$ that contains $\mathcal M$. The {\em  superextension}
$\lambda(X)$ consists of all maximal linked families on $X$. Any
associative binary operation $*: X\times X \to X$ can be extended
to an associative binary operation $*:
\lambda(X)\times\lambda(X)\to\lambda(X)$. In the
paper we study isomorphisms of superextensions of groups and prove that
two groups are isomorphic if and only if their superextensions are isomorphic.
Also we describe the automorphism groups of superextensions of all  groups
$G$ of order $|G|\leq 5$.
\end{abstract}
\maketitle

\section*{Introduction}

In this paper we investigate the automorphism group of the
superextension $\lambda(G)$ of a  group $G$. The
thorough study of various extensions of semigroups was started in
\cite{G2} and continued in \cite{BG2}--\cite{BGN}, \cite{G3}--\cite{G8}.
The largest among these extensions is the semigroup $\upsilon(S)$ of all
upfamilies on $S$. A family $\mathcal{A}$ of non-empty subsets of
a set $X$ is called an {\em upfamily} if for each set
$A\in\mathcal{A}$ any subset $B\supset A$ of $X$ belongs to
$\mathcal{A}$. Each family $\mathcal{B}$ of non-empty subsets of
$X$ generates the upfamily $\langle B \subset X :
B\in\mathcal{B}\rangle = \{ A \subset X : \exists B \in\mathcal{B}\
( B \subset A )\}$. An upfamily $\mathcal{F}$  that is closed
under taking  finite intersections is called a {\em filter}. A
filter $\mathcal{U}$ is called an {\em ultrafilter} if
$\mathcal{U} = \mathcal{F}$ for any filter $\mathcal{F}$
containing $\mathcal{U}$. The family $\beta(X)$ of all
ultrafilters on a set $X$ is called {\em the Stone-\v Cech
compactification of $X$}, see \cite{HS}, \cite{TZ}. An ultrafilter, generated by a singleton $\{ x \}$, $x\in X$, is called
{\em principal}. Each point $x\in X$ is identified with the
principal ultrafilter $\langle\{ x \}\rangle$ generated by the
singleton $\{ x \}$, and hence we can consider
$X\subset\beta(X)\subset\upsilon(X)$. It was shown in \cite{G2}
that any associative binary operation $* : S\times S \to S$ can be
extended to an associative binary operation $*:
\upsilon(S)\times\upsilon(S)\to\upsilon(S)$ defined by the formula
\begin{equation*}
\mathcal A*\mathcal B=\big\langle\bigcup_{a\in A}a*B_a:A\in\mathcal A,\;\;\{B_a\}_{a\in
A}\subset\mathcal B\big\rangle
\end{equation*}
for upfamilies $\mathcal{A}, \mathcal{B}\in\upsilon(S)$. In this
case the Stone-\v Cech compactification $\beta(S)$ is a
subsemigroup of the semigroup $\upsilon(S)$.

The semigroup $\upsilon(S)$ contains many other important
extensions of $S$. In particular, it contains the semigroup
$\lambda(S)$ of maximal linked upfamilies. The space $\lambda(S)$
is well-known in  General and Categorial Topology as the {\em
superextension} of $S$, see \cite{vM}--\cite{Ve}.   An
upfamily $\mathcal L$ of subsets  of $S$ is {\em linked} if $A\cap
B\ne\emptyset$ for all $A,B\in\mathcal L$.  The family of all
linked upfamilies on $S$ is denoted by $N_2(S)$. It is a
subsemigroup of $\upsilon(S)$. The superextension
 $\lambda(S)$ consists of all maximal elements of $N_2(S)$, see \cite{G1}, \cite{G2}.

For a finite set $X$, the cardinality of the set $\lambda(X)$ grows very quickly as $|X|$ tends to infinity.
The calculation of the cardinality of $\lambda(X)$ seems to be a
difficult combinatorial problem, which can be reformulated as the problem of counting the number
$\lambda(n)$ of  self-dual monotone Boolean functions of $n$ variables, which is well-known in Discrete Mathematics.
According to Proposition 1.1 in \cite{BMMV}, $$\log_2\lambda(n)=\frac{2^n}{\sqrt{2\pi n}}+o(1),$$ which
means that the sequence $(\lambda(n))_{n=1}^\infty$ has double exponential growth.
The sequence of numbers  $\lambda(n)$ (known in Discrete Mathematics as Ho\c sten-Morris numbers) is
included in the {\tt On-line Encyclopedia of Integer Sequences} as the sequence A001206.
All known precise values of this sequence (taken from \cite{BMMV}) are presented in the following table.
\smallskip

\begin{center}
\begin{tabular}{|r|ccccccccc|}
\hline
$|X|=$ & 1 & 2 & 3 & 4 & 5 & 6 & 7 & 8 &9\\
\hline
$|\lambda(X)|=$ & 1 & 2 & 4 & 12 & 81 & 2646 & 1422564 &229809982112&423295099074735261880\\
\hline
\end{tabular}
\end{center}
\bigskip

 Each map $f:X\to Y$ induces the map
$$\lambda f:\lambda (X)\to \lambda (Y),\quad \lambda f:\mathcal M\mapsto
\big\langle f(M)\subset Y: M\in\mathcal M\big\rangle.$$

If $\varphi: S\to S'$ is a homomorphism of semigroups, then $\lambda \varphi:
\lambda(S)\to \lambda(S')$ is a homomorphism of their superextensions, see \cite{G4}.

A non-empty subset $I$ of a semigroup $S$ is called an {\em  ideal}
if $IS\cup SI\subset I$. An ideal $I$ of a semigroup $S$ is said to be  {\em proper} if $I\neq S$.
A proper ideal $M$ of $S$ is  {\em maximal} if $M$ coincides with each proper ideal $I$ of $S$ that contains $M$.

An element $z$ of a semigroup $S$ is called {\em zero} in $S$ if $az=za=z$ for any $a\in
S$. An element $e$ of a semigroup $S$ is called an {\em idempotent} if $ee=e$. By $E(S)$ we denote the
set of all idempotents of a semigroup $S$.

Recall that an {\em isomorphism} between semigroups $S$ and $S'$ is a bijective function
$\psi:S\to S'$ such that $\psi(xy)=\psi(x)\psi(y)$ for
all $x,y\in S$. If there exists an isomorphism between $S$ and
$S'$, then $S$ and $S'$ are said to be {\em isomorphic}, denoted
$S\cong S'$. An isomorphism $\psi:S\to S$ is called an {\em automorphism} of a semigroup $S$. By $\Aut(S)$ we denote
the automorphism group of a semigroup $S$.

Following the algebraic tradition, we take for a model of a cyclic group of order $n$ the multiplicative
group $C_n=\{z\in\mathbb C:z^n=1\}$ of $n$-th roots of $1$.

For a set $X$ by $S_X$ we denote the group of all bijections of $X$.
For two sets $X\subset Y$ we shall identify $S_X$ with the subgroup $\{\varphi\in S_Y:\varphi|Y\setminus X=\mathrm{id}\}$.

\section{Extending isomorphisms from  groups to their superextensions}\label{extension}

In this section we observe that each isomorphism of  groups can be extended to an isomorphism of their superextensions
and two groups are isomorphic if and only if their superextensions are isomorphic.
The following statements are corollaries of the functoriality of the super\-extension in the category of semigroups, see~\cite{BG3b, TZ}.

\begin{proposition}\label{liso} If $\psi: G\to H$ is an isomorphism of groups, then $\lambda \psi:
\lambda(G)\to \lambda(H)$ is an isomorphism their superextensions.
\end{proposition}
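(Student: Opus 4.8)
The plan is to exploit the functoriality of the superextension in the category of semigroups: the assignment $f\mapsto\lambda f$ preserves identities and composition, that is, $\lambda(\mathrm{id}_X)=\mathrm{id}_{\lambda(X)}$ and $\lambda(f\circ g)=\lambda f\circ\lambda g$ whenever the composition is defined. Both identities can be read off directly from the defining formula $\lambda f:\mathcal M\mapsto\langle f(M)\subset Y:M\in\mathcal M\rangle$. For the identity map this is immediate, since an upfamily $\mathcal M$ is generated by itself. For a composition $f\circ g$ one checks that a set $B$ contains some $f(g(M))$ with $M\in\mathcal M$ if and only if $B$ contains $f(A)$ for some $A$ in the upfamily $\lambda g(\mathcal M)$; this is the only computation involved and it is routine.

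Granting functoriality, the proof is short. Since $\psi$ is an isomorphism of groups, its inverse $\psi^{-1}:H\to G$ is again an isomorphism, hence in particular a homomorphism of semigroups. First I would apply $\lambda$ to the two relations $\psi^{-1}\circ\psi=\mathrm{id}_G$ and $\psi\circ\psi^{-1}=\mathrm{id}_H$. Functoriality turns these into
$$\lambda\psi^{-1}\circ\lambda\psi=\lambda(\mathrm{id}_G)=\mathrm{id}_{\lambda(G)},\qquad \lambda\psi\circ\lambda\psi^{-1}=\lambda(\mathrm{id}_H)=\mathrm{id}_{\lambda(H)}.$$
Thus $\lambda\psi$ and $\lambda\psi^{-1}$ are mutually inverse maps, so in particular $\lambda\psi$ is a bijection of $\lambda(G)$ onto $\lambda(H)$.

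It remains to combine this with the fact, recorded before the statement, that the superextension of a semigroup homomorphism is again a semigroup homomorphism. Applying this to the homomorphism $\psi$ shows that $\lambda\psi$ respects the extended operation on the superextensions, and a bijective homomorphism of semigroups is an isomorphism; hence $\lambda\psi:\lambda(G)\to\lambda(H)$ is an isomorphism, as claimed.

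I do not expect any genuine obstacle here, since the entire content is packaged into the functoriality of $\lambda$, which is already available by \cite{BG3b, TZ}. The only points demanding care are the verification that $\lambda f$ lands in the maximal linked families and that $\lambda$ preserves composition; the former is part of the well-definedness of $\lambda f$ assumed above, while the latter reduces to the elementary set-containment check indicated in the first paragraph.
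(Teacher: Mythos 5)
Your argument is correct and is exactly the route the paper takes: the paper gives no separate proof, stating that the proposition is a corollary of the functoriality of $\lambda$ in the category of semigroups (citing \cite{BG3b, TZ}), which is precisely the identity-and-composition argument you spell out via $\psi^{-1}\circ\psi=\mathrm{id}_G$ and $\psi\circ\psi^{-1}=\mathrm{id}_H$. You merely make explicit the routine verifications ($\lambda(\mathrm{id})=\mathrm{id}$, $\lambda(f\circ g)=\lambda f\circ\lambda g$, and that $\lambda$ of a homomorphism is a homomorphism) that the paper delegates to the references.
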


\begin{corollary}\label{laut} If $\psi: G\to G$ is an automorphism of a
group $G$, then $\lambda \psi:
\lambda(G)\to \lambda(G)$ is an automorphism of the superextension $\lambda(G)$.
\end{corollary}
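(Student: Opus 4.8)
The plan is to obtain this statement as the special case $H=G$ of the preceding Proposition~\ref{liso}. By definition, an automorphism of the group $G$ is nothing other than an isomorphism $\psi\colon G\to G$ of $G$ onto itself, and likewise an automorphism of the semigroup $\lambda(G)$ is precisely an isomorphism $\lambda(G)\to\lambda(G)$ onto itself. First I would invoke Proposition~\ref{liso}, which asserts that for \emph{any} group isomorphism $\psi\colon G\to H$ the induced map $\lambda\psi\colon\lambda(G)\to\lambda(H)$ is an isomorphism of the corresponding superextensions. Setting $H=G$ then yields immediately that $\lambda\psi\colon\lambda(G)\to\lambda(G)$ is a semigroup isomorphism of $\lambda(G)$ onto itself, i.e.\ an automorphism, which is exactly the assertion.

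Should one prefer a self-contained verification rather than a bare citation, I would instead appeal directly to the functoriality of $\lambda$ in the category of semigroups. The two facts I would use are that $\lambda$ preserves identities, $\lambda(\mathrm{id}_G)=\mathrm{id}_{\lambda(G)}$, and that it is compatible with composition, $\lambda(\varphi\circ\psi)=\lambda\varphi\circ\lambda\psi$, both of which follow by unwinding the definition $\lambda\psi\colon\mathcal M\mapsto\langle\psi(M):M\in\mathcal M\rangle$. Applying this to the inverse automorphism $\psi^{-1}$ gives $\lambda\psi\circ\lambda(\psi^{-1})=\lambda(\psi\circ\psi^{-1})=\mathrm{id}_{\lambda(G)}$ and symmetrically $\lambda(\psi^{-1})\circ\lambda\psi=\mathrm{id}_{\lambda(G)}$, so $\lambda\psi$ is a bijection with inverse $\lambda(\psi^{-1})$; and $\lambda\psi$ is a semigroup homomorphism because each $\lambda$-image of a homomorphism is a homomorphism, as noted in the excerpt.

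I do not anticipate any genuine obstacle here, since the result is a direct specialization of the proposition to the diagonal case $H=G$; the only point that must be stated explicitly is the elementary observation that "automorphism" means "isomorphism onto itself" on both the group and the semigroup side, so that the diagonal instance of Proposition~\ref{liso} lands exactly in $\Aut(\lambda(G))$.
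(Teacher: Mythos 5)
Your proposal is correct and matches the paper's route exactly: the paper states Corollary~\ref{laut} as an immediate consequence of Proposition~\ref{liso} (itself a corollary of the functoriality of $\lambda$ in the category of semigroups), which is precisely your specialization to $H=G$. Your optional self-contained verification via $\lambda(\psi^{-1})$ being inverse to $\lambda\psi$ is a harmless elaboration of the same functoriality argument the paper cites.
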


%\begin{proof}
%   It was shown in \cite[Proposition 1.1]{G4} that if $\psi: S\to S$ is a homomorphism, then $\lambda \psi:
%\lambda(S)\to \lambda(S)$ is a homomorphism as well. Let us prove that $\lambda \psi:
%\lambda(S)\to \lambda(S)$ is a bijection. Let $\mathcal L, \mathcal M\in\lambda(S)$, $\mathcal L\neq\mathcal M$. The maximality of $\mathcal L$ and $\mathcal M$
%implies $L\cap M=\emptyset$ for some $L\in\mathcal L$, $M\in\mathcal M$.
%Since $\psi: S\to S$ is one-to-one,  $\psi(L)\cap \psi(M)=\emptyset$. So $\lambda \psi(\mathcal L)\neq\lambda \psi(\mathcal M)$, and hence
%$\lambda \psi:\lambda(S)\to \lambda(S)$ is one-to-one. Taking into account that for each
%$\mathcal B\in\lambda(S)$ the maximal linked upfamily $\{\psi^{-1}(B): B\in\mathcal B\}$
%belongs to the preimage of $\mathcal B$, we conclude that $\lambda \psi:\lambda(S)\to \lambda(S)$ is a bijection.
%\end{proof}

\begin{corollary}\label{subaut} The automorphism group $\Aut(\lambda(G))$ of the superextension of a group $G$ contains as a subgroup
an isomorphic copy of the automorphism group $\Aut(G)$ of $G$.
\end{corollary}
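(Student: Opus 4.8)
The plan is to promote Corollary~\ref{laut} into an embedding. Define a map $\Lambda:\Aut(G)\to\Aut(\lambda(G))$ by the rule $\Lambda(\psi)=\lambda\psi$; Corollary~\ref{laut} guarantees that $\lambda\psi$ is indeed an automorphism of $\lambda(G)$, so $\Lambda$ is well-defined. The goal is then to show that $\Lambda$ is an injective group homomorphism, for then its image is a subgroup of $\Aut(\lambda(G))$ isomorphic to $\Aut(G)$, which is exactly the assertion.

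First I would verify that $\Lambda$ is a homomorphism, i.e. that $\Lambda(\psi\circ\varphi)=\Lambda(\psi)\circ\Lambda(\varphi)$, or equivalently $\lambda(\psi\circ\varphi)=\lambda\psi\circ\lambda\varphi$, together with $\lambda(\mathrm{id}_G)=\mathrm{id}_{\lambda(G)}$. This is precisely the functoriality of the superextension construction in the category of semigroups invoked earlier in this section; alternatively one checks it directly from the defining formula $\lambda f:\mathcal M\mapsto\langle f(M):M\in\mathcal M\rangle$ by a routine tracing of sets through the two maps. I expect this step to be purely formal.

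The essential point is injectivity, and here I would use the identification of $G$ with the principal ultrafilters inside $\lambda(G)$. Each $x\in G$ is identified with $\langle\{x\}\rangle=\{A\subset G:x\in A\}$, which is a maximal linked family: any two sets containing $x$ meet at $x$, while a set avoiding $x$ cannot be adjoined, since it misses the singleton $\{x\}\in\langle\{x\}\rangle$; hence $\langle\{x\}\rangle\in\lambda(G)$. The key computation is that $\lambda\psi$ respects this identification, namely $\lambda\psi(\langle\{x\}\rangle)=\langle\{\psi(x)\}\rangle$ for every $x\in G$, which is immediate from the defining formula because the smallest member of the upfamily $\langle\psi(A):x\in A\rangle$ is $\psi(\{x\})=\{\psi(x)\}$. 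Consequently, if $\Lambda(\psi)=\lambda\psi=\mathrm{id}_{\lambda(G)}$, then $\langle\{\psi(x)\}\rangle=\langle\{x\}\rangle$ for all $x\in G$, forcing $\psi(x)=x$ and hence $\psi=\mathrm{id}_G$. Thus $\Lambda$ has trivial kernel and is injective, completing the argument. The only mild obstacle I anticipate is keeping the action of $\lambda\psi$ on principal ultrafilters straight, but once that identity is in hand the conclusion follows at once.
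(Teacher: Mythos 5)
Your proposal is correct and follows essentially the route the paper intends: Corollary~\ref{subaut} is stated there as a direct consequence of the functoriality of $\lambda$ (via Corollary~\ref{laut}), and your map $\psi\mapsto\lambda\psi$ together with the injectivity check on principal ultrafilters is exactly the standard way to fill in that argument. Nothing is missing; the verification that $\lambda\psi(\langle\{x\}\rangle)=\langle\{\psi(x)\}\rangle$ is the right key step for injectivity.
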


%\begin{proof}
%   First show that $\lambda(\psi_1\circ\psi_2)=\lambda\psi_1\circ\lambda\psi_2$ for any $\psi_1, \psi_2\in\Aut(S)$. Indeed,
%$$\lambda\psi_1\circ\lambda\psi_2(\mathcal L)=\lambda \psi_1(\langle \psi_2(L): L\in\mathcal L\rangle)=
%\langle \psi_1(\psi_2(L)): L\in\mathcal L\rangle=\lambda(\psi_1\circ\psi_2)(\mathcal L)$$ for any $\mathcal L\in\lambda (S)$.
%
%Let $\psi_1\neq\psi_2$, then $\psi_1(a)\neq\psi_2(a)$ for some $a\in S$,
%and hence
%$$\lambda\psi_1(\langle\{a\}\rangle)=\langle\{\psi_1(a)\}\rangle\neq\langle\{\psi_2(a)\}\rangle=\lambda\psi_2(\langle\{a\}\rangle).$$
%
%Consequently, the mapping $\varphi:\Aut(S)\to\Aut(\lambda(S))$, $\varphi: \psi\mapsto\lambda\psi$, is an injective homomophism. Therefore,
%$\Aut(S)$ is isomorphic to the subgroup $\{\lambda\psi: \psi\in\Aut(S)\}$ of the %group $\Aut(\lambda(S))$.
%\end{proof}

Corollary~\ref{subaut} motivates a question: {\em is the automorphism group $\Aut(G)$ of a group $G$  normal in the automorphism group
$\Aut(\lambda(G))$ of its superextension $\lambda(G)$?} In the next section we show that the automorphism group $\Aut(C_2\times C_2)$
of the Klein four-group $C_2\times C_2$ is not normal in $\Aut(\lambda(C_2\times C_2))$.

\begin{proposition}\label{innergr}
 Let $G$ and $H$ be  groups. If $\psi: \lambda(G)\to\lambda(H)$ is an isomorphism, then $\psi(G)=H$.
\end{proposition}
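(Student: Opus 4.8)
The plan is to single out the copy of $G$ inside $\lambda(G)$ by a purely algebraic property that every semigroup isomorphism must respect, namely \emph{invertibility}. Recall that $G$ is identified with the set of principal maximal linked families $\{\langle\{g\}\rangle:g\in G\}\subset\lambda(G)$, and that $\langle\{e\}\rangle$, for $e$ the identity of $G$, is the two-sided identity of the monoid $\lambda(G)$: a short computation with the formula for $*$ shows $\langle\{e\}\rangle*\mathcal M=\mathcal M=\mathcal M*\langle\{e\}\rangle$ for every $\mathcal M\in\lambda(G)$. Since an isomorphism carries the identity to the identity and units to units (and $\psi^{-1}$ does likewise), it will suffice to prove that the group of units of $\lambda(G)$ is exactly $\{\langle\{g\}\rangle:g\in G\}$; the same statement for $H$ then gives $\psi(G)=\psi(\text{units of }\lambda(G))=\text{units of }\lambda(H)=H$.

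One inclusion is immediate: because $g\mapsto\langle\{g\}\rangle$ is a homomorphism $G\hookrightarrow\lambda(G)$, each $\langle\{g\}\rangle$ is invertible, with inverse $\langle\{g^{-1}\}\rangle$. The heart of the argument is the reverse inclusion: if $\mathcal M,\mathcal N\in\lambda(G)$ satisfy $\mathcal M*\mathcal N=\langle\{e\}\rangle$, then $\mathcal M$ and $\mathcal N$ are principal. I would analyze when $\{e\}\in\mathcal M*\mathcal N$. By the definition of $*$ this requires a set $A\in\mathcal M$ and sets $\{N_a\}_{a\in A}\subset\mathcal N$ with $\bigcup_{a\in A}a*N_a\subset\{e\}$. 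As each $N_a$ is non-empty, every $a*N_a$ is a non-empty subset of $\{e\}$, forcing $a*N_a=\{e\}$ and hence $N_a=\{a^{-1}\}$ for all $a\in A$. Since $\mathcal N$ is linked, the singletons $\{a^{-1}\},\{b^{-1}\}\in\mathcal N$ must meet, so $a=b$; thus $A=\{a_0\}$ is a singleton with $\{a_0\}\in\mathcal M$ and $\{a_0^{-1}\}\in\mathcal N$.

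It then remains to observe that a maximal linked family containing a singleton is principal: if $\{a_0\}\in\mathcal M$, then by linkedness every member of $\mathcal M$ meets $\{a_0\}$, i.e.\ contains $a_0$, whence $\mathcal M\subset\langle\{a_0\}\rangle$; as $\langle\{a_0\}\rangle$ is itself linked, maximality of $\mathcal M$ gives $\mathcal M=\langle\{a_0\}\rangle$, and likewise $\mathcal N=\langle\{a_0^{-1}\}\rangle$. This identifies the units of $\lambda(G)$ with the principal families and completes the characterization, hence the proof. I expect the only delicate point to be the computation of the product near the identity in the second paragraph: one must exploit both the non-emptiness of the chosen sets $N_a$ and the linkedness of $\mathcal N$ to collapse $A$ to a single point, after which maximal linkedness does the rest.
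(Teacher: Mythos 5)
Your proposal is correct, but it takes a genuinely different route from the paper. The paper identifies $G$ inside $\lambda(G)$ via its \emph{complement}: citing a result from an earlier work (Banakh--Gavrylkiv, \emph{Algebra in superextension of groups, II}) that $\lambda(G)\setminus G$ is an ideal, it observes that any ideal meeting $G$ must be all of $\lambda(G)$, so $\lambda(G)\setminus G$ is the unique maximal ideal; since isomorphisms preserve unique maximal ideals, $\psi(G)=H$. You instead identify $G$ as the \emph{group of units} of the monoid $\lambda(G)$, and prove this characterization from scratch: the computation that $\mathcal M*\mathcal N=\langle\{e\}\rangle$ forces the generating set $A$ to collapse to a singleton (via non-emptiness of the $N_a$ and linkedness of $\mathcal N$), and that a maximal linked family containing a singleton is principal, is all correct and works straight from the definition of $*$. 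The two characterizations are dual faces of the same structure (in a monoid whose non-units form an ideal, that ideal is the unique maximal one), but the proofs differ in substance: the paper's argument is shorter because it outsources the combinatorial work to the cited ideal property, whereas yours is self-contained and more elementary, needing nothing beyond the definitions in the introduction. Your argument even yields slightly more: any one-sided inverse already forces principality, so $\lambda(G)$ has no one-sided units outside $G$.
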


\begin{proof}
 It was shown in \cite[Proposition 1.1]{BG2} that $\lambda(G)\setminus G$ is an ideal of $\lambda(G)$. Let us prove that
$\lambda(G)\setminus G$ is the unique maximal ideal of $\lambda(G)$. Indeed, let $I$ be any ideal of $\lambda(G)$. If $g\in G\cap I$, then
$\lambda(G)=g\lambda(G)\subset I$, and hence $I=\lambda(G)$. Consequently, $\lambda(G)\setminus G$ contains each proper ideal of $\lambda(G)$.
In the same way $\lambda(H)\setminus H$ is the unique maximal ideal of $\lambda(H)$.
Taking into account that the set of maximal ideals of a semigroup is preserved by isomorphisms
and $\lambda(G)\setminus G$ and $\lambda(H)\setminus H$ are  unique maximal ideals of $\lambda(G)$ and $\lambda(H)$ respectively,
we conclude that $\psi(\lambda(G)\setminus G)=\lambda(H)\setminus H$. Therefore, $\psi(G)=H$.
\end{proof}

\begin{corollary}\label{extg} For any groups $G$ and $H$, each isomorphism from  $\lambda(G)$ to $\lambda(H)$
is an extension of an isomorphism from $G$ to $H$.
\end{corollary}

\begin{corollary}\label{ext} For any group $G$, each automorphism of  $\lambda(G)$ is an extension of an auto\-morphism of $G$.
\end{corollary}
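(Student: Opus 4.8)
The plan is to obtain this statement as the special case $H=G$ of Corollary~\ref{extg}. First I would take an arbitrary automorphism $\psi\colon\lambda(G)\to\lambda(G)$ and note that, in particular, $\psi$ is an isomorphism of superextensions in the sense of Proposition~\ref{innergr} and Corollary~\ref{extg}, with $H=G$. Thus nothing new has to be set up: the corollary should read off directly from the already-established results about isomorphisms between $\lambda(G)$ and $\lambda(H)$ by merely letting $H$ coincide with $G$.

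Next I would invoke Proposition~\ref{innergr}, which guarantees that $\psi(G)=G$, where $G$ is identified with its embedded copy in $\lambda(G)$ consisting of the principal ultrafilters $\langle\{g\}\rangle$, $g\in G$. This equality means that $\psi$ carries the copy of $G$ inside $\lambda(G)$ bijectively onto itself, so the restriction $\psi|_G\colon G\to G$ is well defined and bijective. Being a restriction of the semigroup isomorphism $\psi$, it is also multiplicative, hence an automorphism of the group $G$; and $\psi$ is, by construction, an extension of $\psi|_G$. This is precisely the assertion of Corollary~\ref{ext}.

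The only substantial point in the argument is the claim that an automorphism of $\lambda(G)$ must send the embedded copy of $G$ onto itself, and this is exactly the content of Proposition~\ref{innergr} (which in turn identifies $\lambda(G)\setminus G$ as the unique maximal ideal, preserved by isomorphisms). Since that proposition is already available, I expect no genuine obstacle here: once Proposition~\ref{innergr}, equivalently Corollary~\ref{extg}, is in hand, the corollary follows immediately by specializing $H=G$ and passing to the restriction $\psi|_G$.
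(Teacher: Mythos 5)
Your proof is correct and follows exactly the paper's route: the corollary is the special case $H=G$ of Corollary~\ref{extg}, obtained by applying Proposition~\ref{innergr} to conclude $\psi(G)=G$ and then restricting $\psi$ to $G$. The paper treats this as an immediate consequence and gives no further argument, so your write-up matches its intent precisely.
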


Propositions~\ref{liso} and~\ref{innergr} imply the following theorem.

\begin{theorem}
Two groups are isomorphic if and only if their superextensions are isomorphic.
\end{theorem}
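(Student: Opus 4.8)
The plan is to prove the biconditional by treating its two implications separately, each of which is essentially a direct appeal to one of the two propositions already established.

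For the forward implication, suppose $G\cong H$, witnessed by a group isomorphism $\psi\colon G\to H$. Here I would simply invoke Proposition~\ref{liso}: it guarantees that the induced map $\lambda\psi\colon\lambda(G)\to\lambda(H)$ is an isomorphism of superextensions, whence $\lambda(G)\cong\lambda(H)$. Nothing further is required in this direction.

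For the converse, suppose $\lambda(G)\cong\lambda(H)$ via an isomorphism $\psi\colon\lambda(G)\to\lambda(H)$. Proposition~\ref{innergr} tells me that $\psi$ carries the copy of $G$ sitting inside $\lambda(G)$ onto the copy of $H$ inside $\lambda(H)$, that is, $\psi(G)=H$. The remaining step is to upgrade this set-level equality to a genuine group isomorphism. I would argue that the restriction $\psi|_G\colon G\to H$ is a bijection, being the restriction of a bijection to a set whose image is exactly $H$, and that it is a homomorphism, since the principal families constituting $G$ form a subsemigroup of $\lambda(G)$ on which the extended operation agrees with the original group operation (and similarly for $H$), while $\psi$ respects the semigroup operation. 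Thus $\psi|_G$ is a bijective homomorphism between the groups $G$ and $H$, hence a group isomorphism, and so $G\cong H$.

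The only point demanding any care — rather than a true obstacle — is this last transfer in the converse direction: one must know that $G\hookrightarrow\lambda(G)$ is an embedding of $G$ as a subsemigroup, not a mere set inclusion, so that a semigroup isomorphism restricting to a bijection $G\to H$ is automatically a group isomorphism. This is precisely what Corollary~\ref{extg} records, so in practice I could bypass the converse argument entirely by citing that corollary. I anticipate no computational difficulty whatsoever; the theorem is a clean repackaging of Propositions~\ref{liso} and~\ref{innergr}.
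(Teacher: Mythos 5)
Your proof is correct and follows exactly the paper's route: the paper derives the theorem from Propositions~\ref{liso} and~\ref{innergr} (with Corollary~\ref{extg} available for the converse), which is precisely your two-implication argument with the details written out.
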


\section{The automorphism groups of the superextensions of  groups of order $\le 5$}\label{group}

In this section we shall study automorphisms of superextensions of groups and
describe the structure of the automorphism groups of superextensions $\lambda(G)$  of finite groups
 $G$ of cardinality $|G|\le5$.

Before describing the structure of extensions of finite groups,
let us make some remarks concerning the structure of a semigroup
$S$ containing a group $G$ with the identity  element which also
is a left identity  of $S$. In this case $S$ can be thought as a
$G$-space endowed with the left action of the group $G$. So we can
consider the orbit space $S/G=\{Gs:s\in S\}$ and the projection
$\pi:S\to S/G$. If $G$ lies in the center of the semigroup $S$
(which means that the elements of $G$ commute with all the
elements of $S$), then the orbit space $S/G$ admits a unique
semigroup operation turning $S/G$ into a semigroup and the orbit
projection $\pi:S\to S/G$ into a semigroup homomorphism. If $s\in
S$ is an idempotent, then the orbit $Gs$ is a group isomorphic to
a quotient group of $G$. A subsemigroup $T\subset S$ will be called a {\em
transversal semigroup} if the restriction $\pi:T\to S/G$ is an
isomorphism of the semigroups. If $S$ admits a transversal
semigroup $T$ and the elements of $G$ and $T$ commute, then $S$ is a homomorphic image of the product
$G\times T$ under the semigroup homomorphism
$$h:G\times T\to S,\;\; h:(g,t)\mapsto gt.$$
This helps to recover the algebraic structure of $S$ from the
structure of a transversal semigroup.

\smallskip

 First note that each group $G$ of cardinality $|G|\le 5$ is Abelian and is isomorphic to one of the groups: $C_1$,
$C_2$, $C_3$, $C_4$, $C_2\times C_2, C_5$.

\subsection{The semigroups $\lambda(C_1)$ and $\lambda(C_2)$} For the groups $C_n$ with
$n\in\{1,2\}$ the semigroup $\lambda(C_n)$ is isomorphic to $C_n$. Therefore, $\Aut(\lambda(C_n))\cong\Aut(C_n)\cong C_1$.

\subsection{The semigroup $\lambda(C_3)$}
For the group $C_3$ the semigroup $\lambda(C_3)$ contains three principal ultrafilters $1,z,-z$
where $z=e^{2\pi i/3}$ and the maximal linked upfamily
$\triangle=\langle\{1,z\},\{1,-z\},\{z,-z\}\rangle$ which is the zero in $\lambda(C_3)$.
The superextension $\lambda(C_3)$ is isomorphic to the multiplicative subsemigroup  $\{z\in\mathbb C:z^4=z\}$
of the complex plane.
Taking into account that the zero is preserved by  automorphisms of semigroups, we conclude that each automorphism of $C_3$ is extended to the unique
automorphism of $\lambda(C_3)$ by Corollary~\ref{ext}. Therefore, $\Aut(\lambda(C_3))\cong\Aut(C_3)\cong C_2$.

\subsection{The semigroup $\lambda(C_4)$}\label{ss:C4} Consider the cyclic group $C_4=\{1,-1,i,-i\}$.
The semigroup $\lambda(C_4)$ contains 12 elements:
$$\lambda(C_4)=\{g, g\triangle, g\Box:g\in C_4\},$$ where
$$
\triangle=\langle\{1,i\},\{1,-i\},\{i,-i\}\rangle\mbox{ \ and \ }\square=\langle\{1,i\},\{1,-i\},\{1,-1\},\{i,-i,-1\}\rangle.
$$

Taking into account that $\triangle*\Box=\Box*\triangle=\triangle$, $\triangle*\triangle=\Box*\Box=\Box$ and $C_4$
lies in the center of $\lambda(C_4)$, we conclude that
 $\lambda(C_4)$ contains a transversal semigroup $$T=\{1,\triangle,\square\},$$ where $1$
is the identity of $C_4$.

Let $\psi: \lambda(C_4)\to\lambda(C_4)$ be an automorphism. Then the restriction of $\psi$ to $C_4$ is an automorphism of $C_4$ by Proposition~\ref{innergr}.
Taking into account that $\Box$ is the unique idempotent of $\lambda(C_4)\setminus C_4$, we conclude that
$\psi(\Box)=\Box$, and hence $\psi(g\Box)=\psi(g)*\psi(\Box)=\psi(g)*\Box$ for any
$g\in C_4$. Let $\psi(\triangle)=a\triangle$ for some $a\in C_4$. Then
$$\Box=\psi(\Box)=\psi(\triangle*\triangle)=\psi(\triangle)*\psi(\triangle)=a\triangle a\triangle=a^2\Box.$$
So $a^2=1$ and $a\in\{1,-1\}$.

Since $\Aut(C_4)\cong C_2$ and each automorphism of $C_4$ can be can be extended
to an automorphism of $\lambda(C_4)$ exactly in two different ways,
the group $\Aut(\lambda(C_4))$ contains four elements.
Taking into account that each element
$\psi\in\Aut(\lambda(C_4))$ has order 2, we conclude that $\Aut(\lambda(C_4))\cong C_2\times C_2$.

\subsection{The semigroup $\lambda(C_2\times C_2)$}
The semigroup $\lambda(C_2\times C_2)$ has a similar algebraic structure. It also contains 12 elements:
$$\lambda(C_2\times C_2)=\{g, g\triangle, g\Box:g\in C_2\times C_2\},$$ where
{\small
$$
\begin{aligned}
\triangle=\;&\langle\{(1,1),(1,-1)\},\{(1,1),(-1,1)\},\{(1,-1),(-1,1)\}\rangle\mbox{ and }\\
\square=\;&\langle\{(1,1),(1,-1)\},\{(1,1),(-1,1)\},\{(1,1),(-1,-1)\},\{(1,-1),(-1,1),(-1,-1)\}\rangle.
\end{aligned}$$
}

Taking into account that $\triangle*\Box=\Box*\triangle=\triangle$, $\triangle*\triangle=\Box*\Box=\Box$ and $C_2\times C_2$
lies in the center of $\lambda(C_2\times C_2)$, we conclude that $\lambda(C_2\times C_2)$ contains a  transversal
semigroup $$T=\{e,\triangle,\square\}\subset\lambda(C_2\times C_2),$$
where $e$ is the principal ultrafilter supported by the neutral element $(1,1)$ of $C_2\times C_2$.

We shall prove that the automorphism group $\Aut(\lambda(C_2\times C_2))$ of the semigroup $\lambda(C_2\times C_2)$ is isomorphic to the holomorph $\Hol(C_2\times C_2)$ of the group $C_2\times C_2$.
\smallskip

We recall that the {\em holomorph} $\Hol(S)$ of a semigroup $S$ (see~\cite{R}) is the semi-direct product $S\rtimes \Aut(S):=(S\times \Aut(S),\star)$ of the semigroup $S$ with its automorphism group $\Aut(S)$, endowed with the semigroup operation
$$(x,f)\star (y,g)=(x\cdot f(y),f\circ g).$$
It is known\footnote{https://groupprops.subwiki.org/wiki/Holomorph\_of\_a\_group} that for the group $G=C_2\times C_2$ its holomorph $\Hol(G)$ is isomorphic to the symmetric group $S_4$.

\begin{proposition} For the group $G=C_2\times C_2$ the automorphism group $\Aut(\lambda(G))$
 is iso\-morphic to the holomorph $\Hol(G)$ of the group $G$ and hence is isomorphic to the symmetric group $S_4$.
\end{proposition}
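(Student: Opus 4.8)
The plan is to pin down every automorphism $\psi$ of $\lambda(G)$ by a pair $(\alpha,k)\in\Aut(G)\times G$ and then to recognise the resulting group as the holomorph. By Corollary~\ref{ext} the restriction $\alpha:=\psi|_G$ is an automorphism of $G$, so $\psi$ permutes $G$ and fixes its neutral element $e$. First I would determine the idempotents of $\lambda(G)$: using $\triangle*\triangle=\Box*\Box=\Box$ and the fact that every element of $G=C_2\times C_2$ squares to $e$, a short check of the three types $g$, $g\triangle$, $g\Box$ gives $(g\triangle)^2=(g\Box)^2=\Box$ and $g^2=e$, so that the only idempotents are $e$ and $\Box$. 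Since automorphisms preserve idempotents and $\psi(e)=e$, this forces $\psi(\Box)=\Box$.

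Next I would exploit the ideal $H:=\lambda(G)\setminus G=\{g\triangle,g\Box:g\in G\}$, which $\psi$ preserves because $\psi(G)=G$ by Proposition~\ref{innergr}. From the relations $\triangle*\Box=\triangle$, $\Box*\Box=\Box$, $\triangle*\triangle=\Box$ and the centrality of $G$ one checks that $H$ is a group with identity $\Box$, isomorphic to $C_2^{3}$, in which $\{g\Box:g\in G\}$ is a subgroup and $\triangle$ an involution outside it. Because $\psi(g\Box)=\psi(g)\psi(\Box)=\alpha(g)\Box$, the automorphism $\psi$ maps the subgroup $\{g\Box\}$ onto itself; being a bijection of $H$ it must therefore send the complement $\{g\triangle\}$ onto itself, so $\psi(\triangle)=k\triangle$ for a unique $k\in G$. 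Consequently $\psi$ is completely determined by the pair $(\alpha,k)$ via $\psi(g)=\alpha(g)$, $\psi(g\Box)=\alpha(g)\Box$ and $\psi(g\triangle)=\alpha(g)k\triangle$; denote this map $\psi_{\alpha,k}$.

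Conversely, I would verify that every pair $(\alpha,k)\in\Aut(G)\times G$ yields an automorphism. The map $\psi_{\alpha,k}$ is visibly a bijection, and checking that it is multiplicative reduces to a finite list of cases coming from the multiplication table of $T=\{e,\triangle,\Box\}$ together with the centrality of $G$. The only case that is not automatic is the product of two $\triangle$-type elements: since $(g\triangle)(g'\triangle)=gg'\Box$, its image must equal $\alpha(g)k\,\alpha(g')k\,\triangle^{2}=\alpha(gg')k^{2}\Box$, and this equals $\alpha(gg')\Box$ precisely because $k^{2}=e$ in $C_2\times C_2$. This is the crucial point and the main obstacle: it is exactly where the Klein four-group differs from $C_4$, for there only $k\in\{1,-1\}$ satisfy $k^{2}=1$, which is why $\Aut(\lambda(C_4))$ came out smaller.

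Finally I would identify the group law. A direct computation gives $\psi_{\alpha,k}\circ\psi_{\beta,l}=\psi_{\alpha\circ\beta,\,\alpha(l)k}$, and since $G$ is abelian $\alpha(l)k=k\cdot\alpha(l)$. Comparing with the holomorph operation $(k,\alpha)\star(l,\beta)=(k\cdot\alpha(l),\alpha\circ\beta)$, the bijection $\Hol(G)\to\Aut(\lambda(G))$, $(k,\alpha)\mapsto\psi_{\alpha,k}$, is a group isomorphism; both sides have $|G|\cdot|\Aut(G)|=4\cdot6=24$ elements. Together with the quoted fact $\Hol(C_2\times C_2)\cong S_4$, this yields $\Aut(\lambda(G))\cong\Hol(G)\cong S_4$.
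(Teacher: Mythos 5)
Your proof is correct and follows essentially the same route as the paper: parametrize each automorphism by a pair $(\alpha,k)\in\Aut(G)\times G$ via its action on $G$, the $\Box$-orbit and the $\triangle$-orbit, and then compute the composition law $\psi_{\alpha,k}\circ\psi_{\beta,l}=\psi_{\alpha\circ\beta,\,k\cdot\alpha(l)}$ to identify the group with $\Hol(G)\cong S_4$. In fact you flesh out the one step the paper dismisses with ``it can be shown'' --- the verification that each $\psi_{\alpha,k}$ is multiplicative, whose crucial case $(g\triangle)*(g'\triangle)=gg'\Box$ works only because $k^2=e$ for every $k\in C_2\times C_2$, which is precisely the point where this case diverges from $\lambda(C_4)$.
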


\begin{proof}

Let $\psi: \lambda(G)\to\lambda(G)$ be an automorphism. Then the restriction of $\psi$ to $G$
is an automorphism of $G$ by Proposition~\ref{innergr}.
Taking into account that $\Box$ is the unique idempotent of $\lambda(G)\setminus G$,
we conclude that $\psi(\Box)=\Box$ and $\psi(g\Box)=\psi(g)*\psi(\Box)=\psi(g)*\Box$ for any
$g\in G$. It follows that $\psi(\triangle)=a\triangle$ for some $a\in G$.

It can be shown that for any pair $(a,f)\in G\times \Aut(G)$ the map $\psi_{(a,f)}:\lambda(C_2\times C_2)\to\lambda(C_2\times C_2)$ defined by
$$\psi_{a,f}(x)=f(x),\;\;\psi_{a,f}(x\square)=f(x)\square,\;\;\psi_{a,f}(x\triangle)=f(x)\cdot a\triangle\mbox{ \ for \ $x\in G$}$$
is an automorphism of the semigroup $\lambda(C_2\times C_2)$.

It follows that  each automorphism of $\lambda(G)$ is of the form $\psi_{a,f}$ for some $(a,f)\in G\times \Aut(G)$.

Observe that for any $(a,f),(b,g)\in G\times \Aut(G)$ and $x\in G$ we get:
$$
\begin{aligned}
&\psi_{a,f}\circ \psi_{b,g}(x)=\psi_{a,f}(g(x))=f\circ g(x),\\
&\psi_{a,f}\circ \psi_{b,g}(x\cdot \square)=\psi_{a,f}(g(x)\cdot\square)=f\circ g(x)\cdot\square,\\
&\psi_{a,f}\circ \psi_{b,g}(x\cdot \triangle)=\psi_{a,f}(g(x)\cdot b\cdot\triangle)=f\circ g(x)\cdot f(b)\cdot a\cdot\triangle.
\end{aligned}
$$
Consequently, $\psi_{a,f}\circ\psi_{b,g}=\psi_{a\cdot f(b), f\circ g}$ and hence $\Aut(\lambda(G))$
is isomorphic to the holomorph $\Hol(G)$ of the group $G$, which is known to be isomorphic to the symmetric group $S_4$.
\end{proof}

\subsection{The semigroup $\lambda(C_5)$.}\label{ss:C5}

In this subsection we describe the structure of automorphism group of the semigroup
$\lambda(C_5)$. The algebraic structure of the semigroup $\lambda(C_5)$ is described in \cite{BGN}. This semigroup contains 81 elements.
One of them is zero
$$\Z=\{A\subset C_5:|A|\ge 3\},$$ which is invariant under any bijection of $C_5$.
All the other 80 elements have 5-element orbits under the action of $C_5$,
which implies that the orbit semigroup $\lambda(C_5)/C_5$ consists of 17 elements.
%Let $\pi:\lambda(C_5)\to\lambda(C_5)/C_5$ denote the orbit projection.

It will be convenient to think of $C_5$ as the field $\{0,1,2,3,4\}$ with the
multiplicative subgroup $C_5^*=\{1,-1,2,-2\}$ of invertible elements (here $-1$
and $-2$ are identified with $4$ and $3$, respectively). Also for elements $x,y,z\in C_5$ we shall
write $xyz$ instead of $\{x,y,z\}$.

The semigroup $\lambda(C_5)$ contains 5 idempotents:
$$\begin{aligned}
\mathcal U=&\langle 0\rangle,\;\mathcal Z,\\
\Lambda_4=&\langle 01,02,03,04,1234\rangle,\\
{\Lambda}=&\langle 02,03,123,014,234\rangle,\\
2{\Lambda}=&\langle 04,01,124,023,143\rangle,
\end{aligned}$$
which commute and thus form a commutative subsemigroup $E(\lambda(C_5))$.
Being a semilattice, $E(\lambda(C_5))$ carries a natural partial order:
$e\le f$ iff $e* f=e$. The partial order $$\mathcal Z\le {\Lambda},2{\Lambda}\le \Lambda_4\le\mathcal U$$
 on the set $E(\lambda(C_5))$ is drawn in the picture:

\begin{center}
\begin{picture}(100,125)(-50,10)
\put(-4,18){$\Z$}
\put(0,30){\circle*{3}}
\put(5,35){\line(1,1){20}}
\put(-5,35){\line(-1,1){20}}
\put(-30,60){\circle*{3}}
\put(-45,55){${\Lambda}$}

\put(30,60){\circle*{3}}
\put(35,55){$2{\Lambda}$}
\put(-4,78){$\Lambda_4$}
\put(0,90){\circle*{3}}
\put(0,95){\line(0,1){20}}
\put(-5,35){\line(-1,1){20}}
\put(5,85){\line(1,-1){20}}
\put(-5,85){\line(-1,-1){20}}
\put(0,120){\circle*{3}}
\put(-3,125){$\mathcal U$}
\end{picture}
\end{center}

\centerline{Diagram 1. The structure of the semilattice $E(\lambda(C_5))$}
\medskip

Next, consider two subsets:
$$\sqrt{\mathcal Z}=\{\mathcal L\in\lambda(C_5):\mathcal L*\mathcal L=\mathcal Z\}$$ and
$$
\sqrt{E(\lambda(C_5))}=\{\mathcal L\in\lambda(C_5):\mathcal L*\mathcal L\in E(\lambda(C_5))\}=\{\mathcal L\in\lambda(C_5):\mathcal L*\mathcal L*\mathcal L*\mathcal L=\mathcal L*\mathcal L\}.
$$ We claim that the set $\sqrt{E(\lambda(C_5))}\setminus\sqrt{\mathcal Z}$ has at most one-point intersection with each orbit.
 Indeed, if $\mathcal L\in\sqrt{E(\lambda(C_5))}$ and $\mathcal L*\mathcal L\ne\Z$,
 then  for every $a\in C_5\setminus\{0\}$, we get
$$
\begin{aligned}
(\mathcal L+a)*(\mathcal L+a)*(\mathcal L+a)*(\mathcal L+a)=&\,\mathcal L*\mathcal L*\mathcal L*\mathcal L+4a=\\
=&\,\mathcal L*\mathcal L+4a\ne \mathcal L*\mathcal L+2a=(\mathcal L+a)*(\mathcal L+a).
\end{aligned}
$$witnessing that $\mathcal L+a\notin\sqrt{E(\lambda(C_5))}$.

By a direct calculation one can check that the set $\sqrt{E(\lambda(C_5))}$  contains the
following four maximal linked upfamilies:$$\begin{aligned}
\Delta=&\langle 02,03,23\rangle,\\
\Lambda_3=&\langle 02,03,04,234\rangle,\\
\Theta=&\langle 14,012,013,123,024,034,234\rangle,\\
\Gamma=&\langle 02,04,013,124,234\rangle.
\end{aligned}
$$
For those upfamilies we get
$$
\begin{aligned}
&\Delta* \Delta=\Delta*\Delta*\Delta={\Lambda},\\
&\Lambda_3*\Lambda_3=\Lambda_3*\Lambda_3*\Lambda_3={\Lambda},\\
&\F*\Theta=\F*\Gamma=\Z\;\; \mbox{for every $\F\in\lambda(C_5)\setminus C_5.$}
\end{aligned}
$$
All the other elements of $\lambda(C_5)$ can be found as  images of
$\Delta,\Theta,\Gamma,\Lambda_3$ under the affine transformations of the
 field $C_5$. Those are maps of the form $$f_{a,b}:x\mapsto ax+b \mod 5,$$
 where $a\in\{1,-1,2,-2\}=C_5^*$ and $b\in C_5$. The image of a maximal linked
  upfamily $\mathcal L\in\lambda(C_5)$ under such a transformation will be denoted by $a\mathcal L+b$.

One can check that $a\Lambda_4=\Lambda_4$ for each $a\in C_5^*$
while ${\Lambda}=-{\Lambda}$, and $\Theta=-\Theta$. Since the
linear transformations of the form $f_{a,0}:C_5\to C_5$, $a\in
C_5^*$, are authomorphisms of the group $C_5$, the induced
transformations $\lambda f_{a,0}:\lambda(C_5)\to\lambda(C_5)$
are authomorphisms of the semigroup $\lambda(C_5)$. This implies
that those transformations do not move the subsets
$E(\lambda(C_5))$ and $\sqrt{E(\lambda(C_5))}$. Consequently,
the set $\sqrt{E(\lambda(C_5)}$ contains the maximal linked
upfamilies:
$$a\Delta,a\Theta,a\Lambda_3,a\Gamma, \;a\in\IZ^*_5,$$ which together with the idempotents form a 17-element subset
$$T_{17}=E(\lambda(C_5))\cup\big\{a\Delta,a\Theta:a\in\{1,2\}\big\}\cup \{a\Lambda_3,a\Gamma:a\in\IZ^*_5\}$$
that projects bijectively onto the orbit semigroup $\lambda(C_5)/C_5$. The set $T_{17}$ looks as follows
 (we connect an element $x\in T_{17}$ with an idempotent $e\in T_{17}$ by an arrow if $x* x=e$):

\begin{center}
\begin{picture}(100,170)(-50,-20)
\put(-4,27){$\Z$}
%\put(0,30){\circle*{3}}
\put(6,36){\line(1,1){18}}
\put(-6,36){\line(-1,1){18}}
%\put(-30,60){\circle*{3}}
\put(-33,58){$\Lambda$}

%\put(30,60){\circle*{3}}
\put(21,58){$2\Lambda$}
\put(-4,90){$\Lambda_4$}
%\put(0,90){\circle*{3}}
\put(0,100){\line(0,1){16}}
\put(-6,36){\line(-1,1){18}}
\put(6,86){\line(1,-1){18}}
\put(-6,86){\line(-1,-1){18}}
%\put(0,120){\circle*{3}}
\put(-3,118){$\mathcal U$}

%\put(-30,0){\circle{3}}
\put(-40,-5){$-\Gamma$}
%\put(-30,30){\circle{3}}
\put(-35,27){$\Theta$}
\put(-25,30){\vector(1,0){18}}
\put(28,27){$2\Theta$}

\put(25,30){\vector(-1,0){18}}
\put(-25,5){\vector(1,1){20}}
\put(-15,-10){$\Gamma$}
\put(-9,2){\vector(1,3){7}}
\put(5,-10){$2\Gamma$}
\put(7,2){\vector(-1,3){7}}
\put(22,-5){$-2\Gamma$}
\put(25,5){\vector(-1,1){20}}

\put(-75,37){$-\Lambda_3$}
\put(-55,43){\vector(3,2){20}}
\put(-73,57){$\Delta$}
\put(-62,60){\vector(1,0){25}}
\put(-68,77){$\Lambda_3$}
\put(-55,78){\vector(3,-2){20}}

\put(57,37){$-2\Lambda_3$}
\put(55,43){\vector(-3,2){20}}
\put(66,57){$2\Delta$}
\put(62,60){\vector(-1,0){25}}
\put(60,77){$2\Lambda_3$}
\put(55,78){\vector(-3,-2){20}}
\end{picture}
\end{center}

It follows that $\sqrt{E(\lambda(C_5))}=T_{17}\cup\sqrt{\Z}$ where $\sqrt{\Z}=\{\Theta,2\Theta,\Gamma,2\Gamma,-\Gamma,-2\Gamma\}+C_5$.

Since each element of $\lambda(C_5)$ can be uniquely written as the sum $\mathcal L+b$ for
 some $\mathcal L\in T_{17}$ and  $b\in C_5$, the multiplication table for the semigroup
  $\lambda(C_5)$ can be recovered from the following Cayley table %~\ref{tab:lambda5}
   for multiplication of the elements of the set $T_{17}$.
\smallskip

%\begin{table}[ht]
%\centering
\begin{center}
\resizebox{16.5cm}{!}{
\begin{tabular}{||c||c|cccc|cccc|c||}
\hhline{|t:=:t:==========:t|}
$*$ & $\Lambda_4$ & ${\Lambda}$ & $\Delta$ & $\Lambda_3$ & $-\Lambda_3$ & $2{\Lambda}$ & $2\Delta$ & $2\Lambda_3$ & $-2\Lambda_3$ & $a\Theta,a\Gamma$\\
\hhline{|:=::==========:|}

$\Lambda_4$ & $\Lambda_4$ & ${\Lambda}$ & ${\Lambda}$  & ${\Lambda}$ & ${\Lambda}$  & $2{\Lambda}$ & $2{\Lambda}$ & $2{\Lambda}$ & $2{\Lambda}$ & $\Z$\\
\hhline{||-||----------||}

${\Lambda}$ & ${\Lambda}$ & ${\Lambda}$ & ${\Lambda}$  & ${\Lambda}$ & ${\Lambda}$  & $\Z$ & $\Z$ & $\Z$ & $\Z$ & $\Z$\\
\hhline{||-||----------||}
$\Delta$ & $\Delta$ & ${\Lambda}$ & ${\Lambda}$  & ${\Lambda}$ & ${\Lambda}$  & $2\Theta$ & $2\Theta$ & $2\Theta$ & $2\Theta$ & $\Z$\\

$\Lambda_3$ & $\Lambda_3$ & ${\Lambda}$ & ${\Lambda}$  & ${\Lambda}$ & ${\Lambda}$  & $2\Theta+2$ & $2\Theta+2$ & $2\Theta+2$ & $2\Theta+2$ & $\Z$\\

$-\Lambda_3$ & $-\Lambda_3$ & ${\Lambda}$ & ${\Lambda}$ & ${\Lambda}$ & ${\Lambda}$  & $2\Theta-2$ & $2\Theta-2$ & $2\Theta-2$ & $2\Theta-2$ & $\Z$\\
\hhline{||-||----------||}

$2{\Lambda}$ & $2{\Lambda}$ & $\Z$ & $\Z$ & $\Z$ & $\Z$ & $2{\Lambda}$ & $2{\Lambda}$  & $2{\Lambda}$ & $2{\Lambda}$ & $\Z$\\
\hhline{||-||----------||}

$2\Delta$ & $2\Delta$ & $\Theta$ & $\Theta$ & $\Theta$ & $\Theta$ & $2{\Lambda}$ & $2{\Lambda}$  & $2{\Lambda}$ & $2{\Lambda}$ & $\Z$\\

$2\Lambda_3$ & $2\Lambda_3$ & $\Theta-1$ & $\Theta-1$ & $\Theta-1$ & $\Theta-1$ & $2{\Lambda}$ & $2{\Lambda}$  & $2{\Lambda}$ & $2{\Lambda}$ & $\Z$\\

$-2\Lambda_3$ & $-2\Lambda_3$ & $\Theta+1$ & $\Theta+1$ & $\Theta+1$ & $\Theta+1$ & $2{\Lambda}$ & $2{\Lambda}$  & $2{\Lambda}$ & $2{\Lambda}$ & $\Z$\\
\hhline{||-||----------||}

$\Theta$ & $\Theta$ & $\Theta$ & $\Theta$ & $\Theta$ & $\Theta$ & $\Z$ & $\Z$ & $\Z$ & $\Z$ & $\Z$\\

$2\Theta$ & $2\Theta$ & $\Z$ & $\Z$ & $\Z$ & $\Z$ & $2\Theta$ & $2\Theta$ & $2\Theta$ & $2\Theta$ & $\Z$\\
\hhline{||-||----------||}

$\Gamma$ & $\Gamma$ & $\Theta+1$ & $\Theta+1$ & $\Theta+1$ & $\Theta+1$ & $2\Theta+2$ & $2\Theta+2$ & $2\Theta+2$ & $2\Theta+2$ & $\Z$\\

$-\Gamma$ & $-\Gamma$ & $\Theta-1$ & $\Theta-1$ & $\Theta-1$ & $\Theta-1$ & $2\Theta-2$ & $2\Theta-2$ & $2\Theta-2$ & $2\Theta-2$ & $\Z$\\

$2\Gamma$ & $2\Gamma$ & $\Theta-1$ & $\Theta-1$ & $\Theta-1$ & $\Theta-1$ & $2\Theta+2$ & $2\Theta+2$ & $2\Theta+2$ & $2\Theta+2$ & $\Z$\\

$-2\Gamma$ & $-2\Gamma$ & $\Theta+1$ & $\Theta+1$ & $\Theta+1$ & $\Theta+1$ & $2\Theta-2$ & $2\Theta-2$ & $2\Theta-2$ & $2\Theta-2$ & $\Z$\\

\hhline{|b:=:b:==========:b|}
\end{tabular}
}
\end{center}
%\smallskip
%\caption{The Cayley table for the semigroup $\lambda(C_5)$}\label{tab:lambda5}

%\end{table}

Now we are able to prove the main result of this subsection.

\begin{theorem}\label{t:C5} $\Aut(\lambda(C_5))\cong \Aut(C_5)\cong C_4$.
\end{theorem}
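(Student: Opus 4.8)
The plan is to analyze the restriction homomorphism
\[
r\colon \Aut(\lambda(C_5))\to\Aut(C_5),\qquad r(\psi)=\psi|C_5,
\]
which is well defined by Corollary~\ref{ext}. By Corollary~\ref{laut} every $f\in\Aut(C_5)$ lifts to the automorphism $\lambda f$ with $r(\lambda f)=f$, so $r$ is onto $\Aut(C_5)\cong C_4$. Hence it suffices to prove that $r$ is injective, i.e. that the only automorphism $\psi$ of $\lambda(C_5)$ with $\psi|C_5=\mathrm{id}$ is the identity; then $r$ is an isomorphism and $\Aut(\lambda(C_5))\cong\Aut(C_5)\cong C_4$.

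So I would fix $\psi$ with $\psi|C_5=\mathrm{id}$. Since $C_5$ lies in the centre of $\lambda(C_5)$, the map $\psi$ commutes with all translations $\mathcal L\mapsto\mathcal L+b=b*\mathcal L$; in particular $\psi(\Z)=\Z$, the map $\psi$ carries each $C_5$-orbit onto a $C_5$-orbit, and $\psi(\mathcal L+b)=\psi(\mathcal L)+b$ for every $b\in C_5$. As every element of $\lambda(C_5)$ is uniquely $\mathcal L+b$ with $\mathcal L\in T_{17}$, it is enough to show that $\psi$ fixes each of the $17$ representatives in $T_{17}$. I would write $\psi(\mathcal L)=\sigma(\mathcal L)+c(\mathcal L)$ for $\mathcal L\in T_{17}$, where $\sigma(\mathcal L)\in T_{17}$ is the representative of the orbit to which $\psi$ sends $\mathcal L$ and $c(\mathcal L)\in C_5$ is a shift; thus $\sigma$ is the automorphism induced by $\psi$ on the orbit semigroup $\lambda(C_5)/C_5$, and multiplicativity of $\psi$ is equivalent to the cocycle identity
\[
c(\mathcal L)+c(\mathcal M)-c(\mathcal L*\mathcal M)=d(\mathcal L,\mathcal M)-d(\sigma\mathcal L,\sigma\mathcal M),
\]
where $d(\mathcal L,\mathcal M)\in C_5$ is the shift appearing in the product $\mathcal L*\mathcal M$ as recorded in the Cayley table of $T_{17}$. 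The goal is then to show that the only solution is $\sigma=\mathrm{id}$ and $c\equiv0$.

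Automorphisms preserve the idempotents with their order $\Z\le\Lambda,2\Lambda\le\Lambda_4\le\mathcal U$, so $\sigma$ fixes $\mathcal U$, $\Lambda_4$ and $\Z$ and either fixes or interchanges $\Lambda$ and $2\Lambda$; moreover $\sigma$ preserves $\sqrt\Z$ and the set $\sqrt{E(\lambda(C_5))}\setminus\sqrt\Z$, which meets each orbit in at most one point. The engine of the argument is to feed into the cocycle identity the two columns of the Cayley table indexed by $\Lambda$ and $2\Lambda$: for each representative $\mathcal R$ the pair of products $\mathcal R*\Lambda$ and $\mathcal R*2\Lambda$ simultaneously pins down $\sigma(\mathcal R)$ and $c(\mathcal R)$. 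Under the assumption $\sigma=\mathrm{id}$ the idempotent relations give $c(\Lambda)=c(2\Lambda)=c(\Lambda_4)=c(\Z)=0$; the relations $\Delta*\Lambda=\Lambda$, $\Lambda_3*\Lambda=\Lambda$, $2\Delta*2\Lambda=2\Lambda$ and so on give $c=0$ on $\Delta,\Lambda_3,-\Lambda_3$ and their $2$-multiples; the products $2\Delta*\Lambda=\Theta$ and $\Delta*2\Lambda=2\Theta$ give $c(\Theta)=c(2\Theta)=0$; and the products $\Gamma*\Lambda$, $\Gamma*2\Lambda$ together with their relatives force $c=0$ on $\Gamma,-\Gamma,2\Gamma,-2\Gamma$. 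This yields $c\equiv0$ and hence $\psi=\mathrm{id}$.

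The main obstacle is to exclude the nontrivial possibilities for $\sigma$, above all the interchange $\Lambda\leftrightarrow2\Lambda$ coming from multiplication by $2$. This is delicate because every coarse invariant visible to an automorphism — the orbit structure, the order of the idempotents, the numbers of square roots of $\Lambda$ and of $2\Lambda$, and the sizes of the principal one-sided ideals — is symmetric under this interchange; the asymmetry lives only in the shifts $d(\mathcal L,\mathcal M)$ of the Cayley table. The decisive point is that multiplication by $2$ acts on the $C_5$-orbit as scaling, which no translation can undo, and this surfaces in the cocycle identity as a genuine inconsistency: the relation $\Lambda_3*\Lambda=\Lambda$ forces $c(\Lambda_3)=0$, while $\Delta*\Lambda=\Lambda$, $\Delta*2\Lambda=2\Theta$ and $\Lambda_3*2\Lambda=2\Theta+2$ together force $c(\Lambda_3)=3$, giving $0=3$ in $C_5$. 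The same pair of columns, applied representative by representative, eliminates every remaining nontrivial $\sigma$, whence $\sigma=\mathrm{id}$, $c\equiv0$ and $\psi=\mathrm{id}$, completing the proof that $\Aut(\lambda(C_5))\cong C_4$.
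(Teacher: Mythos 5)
Your overall strategy is viable and is organized genuinely differently from the paper's. Both arguments make the same reduction (the restriction map $\Aut(\lambda(C_5))\to\Aut(C_5)$ is onto, so it suffices to show that $\psi|C_5=\mathrm{id}$ forces $\psi=\mathrm{id}$), and both run on the same data: centrality of $C_5$, the semilattice $E(\lambda(C_5))$ (which fixes $\mathcal U,\Lambda_4,\Z$ and leaves only $\psi(\Lambda)\in\{\Lambda,2\Lambda\}$), and the Cayley table of $T_{17}$. But where you encode $\psi$ as a pair $(\sigma,c)$ and turn multiplicativity into a cocycle identity on the shifts $d(\cdot,\cdot)$, the paper works with $\psi$-invariant \emph{sets}: it builds invariant sets by multiplying and translating known ones, cuts them down by intersections to invariant singletons in the case $\psi(\Lambda)=\Lambda$, and in the case $\psi(\Lambda)=2\Lambda$ it first observes $\psi\circ\psi=\mathrm{id}$ and then gets a cardinality contradiction ($\psi$ would map the two-element set $\{\Theta,\Theta+1\}$ onto the singleton $\{2\Theta-2\}$). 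Your computation in the case $\sigma=\mathrm{id}$ is correct and complete: each relation you cite does force the corresponding value of $c$ to vanish, and this part is arguably more systematic than the paper's singleton-hunting.

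However, your decisive step --- excluding the interchange $\Lambda\leftrightarrow2\Lambda$ --- has a genuine gap as written. The contradiction ``$c(\Lambda_3)=0$ versus $c(\Lambda_3)=3$'' tacitly assumes that $\sigma(\Delta)=2\Delta$ and $\sigma(\Lambda_3)=2\Lambda_3$. Under the interchange, all you know is that $\sigma$ carries $\{\Delta,\Lambda_3,-\Lambda_3\}$ bijectively onto $\{2\Delta,2\Lambda_3,-2\Lambda_3\}$, in one of six possible ways, and some of them evade your computation: for instance, if $\sigma(\Delta)=2\Lambda_3$ and $\sigma(\Lambda_3)=-2\Lambda_3$, then your relations give $c(\Lambda_3)=2+d(2\Lambda_3,\Lambda)-d(-2\Lambda_3,\Lambda)=2+(-1)-1=0$, which is perfectly consistent with $c(\Lambda_3)=0$, so no contradiction appears. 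The repair is to use all three square roots of $\Lambda$ simultaneously: setting $x=d(\sigma(\Delta),\Lambda)$, $y=d(\sigma(\Lambda_3),\Lambda)$, $z=d(\sigma(-\Lambda_3),\Lambda)$, the relations $\Delta*2\Lambda=2\Theta$, $\Lambda_3*2\Lambda=2\Theta+2$, $-\Lambda_3*2\Lambda=2\Theta-2$ combined with $c(\Delta)=c(\Lambda_3)=c(-\Lambda_3)=0$ force
\begin{equation*}
c(2\Theta)\;=\;x\;=\;y-2\;=\;z+2,\qquad \{x,y,z\}=\{0,-1,1\},
\end{equation*}
because $2\Delta*\Lambda=\Theta$, $2\Lambda_3*\Lambda=\Theta-1$, $-2\Lambda_3*\Lambda=\Theta+1$; one checks there is no such assignment modulo $5$, so the interchange is impossible for \emph{every} choice of $\sigma$. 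The same caveat applies to your closing sentence: even when $\sigma$ fixes $\Lambda$ and $2\Lambda$ you must still rule out nontrivial permutations of $\{\Delta,\Lambda_3,-\Lambda_3\}$, of $\{2\Delta,2\Lambda_3,-2\Lambda_3\}$ and of $\{\Gamma,-\Gamma,2\Gamma,-2\Gamma\}$; the analogous joint systems do pin down $\sigma=\mathrm{id}$ and $c\equiv0$, but this must be carried out rather than asserted. Note that the paper sidesteps exactly this bookkeeping by arguing with images of three-element sets (so it never needs to know which square root goes where) and by the $\psi\circ\psi=\mathrm{id}$ trick; if you keep the cocycle formalism, the joint-system argument above is the missing ingredient.
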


\begin{proof} We identify the group $\Aut(C_5)$ with the subgroup $\{\lambda\varphi:\varphi\in\Aut(C_5)\}$ of $\Aut(\lambda(C_5))$. To see that $\Aut(\lambda(C_5))=\Aut(C_5)$, it suffices to prove that an automorphism $\psi$ of $\lambda(C_5)$ is identity if its restriction $\psi|C_5$ is the identity automorphism of the group $C_5$. So, we assume that $\psi(x)=x$ for all $x\in C_5$. Since each element $\A\in\lambda(C_5)$ is of the form $\A=\mathcal L+x$ for some $\mathcal L\in T_{17}$ and $x\in C_5$, it suffices to prove that $\psi(\mathcal L)=\mathcal L$ for every $\mathcal L\in T_{17}$.

Let us call a subset $A\subset \lambda(C_5)$ \ {\em $\psi$-invariant} if $\psi(A)=A$. An element $a\in\lambda(C_5)$ is defined to be $\psi$-{\em invariant} if the singleton $\{a\}$ is $\psi$-invariant. Observe that for any $\psi$-invariant sets $A,B\subset\lambda(C_5)$ the set $A* B=\{a* b:a\in A,\;b\in B\}$ is $\psi$-invariant. In particular, the set $A+x$ is $\psi$-invariant for every $x\in C_5$.

Since $\psi$ is an automorphism of the semigroup $\lambda(C_5)$, the set $E(\lambda(C_5))=\{\mathcal U,\mathcal Z,\Lambda_4,\Lambda,2\Lambda\}$ of idempotents of $\lambda(C_5)$ is $\psi$-invariant and $\psi|E(\lambda(C_5))$ is an automorphism of the semilattice $E(\lambda(C_5))$. Looking at the structure of the semilattice
$E(\lambda(C_5))$, we can see that its elements $\mathcal Z,\mathcal U$ and $\Lambda_4$ are $\psi$-invariant and hence $\psi(\Lambda)\in\{\Lambda,2\Lambda\}$.

Two cases are possible:

1) $\psi(\Lambda)=\Lambda$. In this case the following sets are $\psi$-invariant: $$\begin{gathered}
\{\Lambda\},\;\;\{2\Lambda\},\\
\sqrt{\Lambda}\setminus\{\Lambda\}=\{\Delta,\Lambda_3,-\Lambda_3\},\;\; \sqrt{\{2\Lambda\}}\setminus\{2\Lambda\}=\{2\Delta,-2\Lambda_2,2\Lambda_3\},\\
\{2\Delta,-2\Lambda_2,2\Lambda_3\}* \{\Lambda\}=\{\Theta,\Theta-1,\Theta+1\},\;\;  \{\Delta,\Lambda_3,-\Lambda_3\}* \{2\Lambda\}=\{2\Theta,2\Theta+2,2\Theta-2\}.
\end{gathered}
$$

It follows that the set $\{\Theta,\Theta-1,\Theta+1\}+2=\{\Theta+2,\Theta+1,\Theta+3\}$ is $\psi$-invariant and so are the sets  $\{\Theta+1\}=\{\Theta,\Theta-1,\Theta+1\}\cap\{\Theta+2,\Theta+1,\Theta+3\}$ and $\{\Theta\}$. By analogy we can prove that the element $2\Theta$ is $\psi$-invariant.

Since the elements $\Lambda$, $2\Lambda$, $\Theta$ and $2\Theta$ are $\psi$-invariant, the set $$\{\mathcal L\in\lambda(C_5):\mathcal L*\mathcal L=\Lambda,\;\mathcal L* \Lambda=\Lambda,\;\mathcal L* 2\Lambda=2\Theta\}=\{\Delta\}$$ is $\psi$-invariant and hence the element $\Delta$ is $\psi$-invariant. By analogy we can prove that the element $2\Delta$, $\Lambda_3$, $2\Lambda_3$, $-\Lambda_3$, $-2\Lambda_3$ are $\psi$-invariant.

Since the elements $\Z$, $\Lambda$, $\Theta+1$, $2\Theta+2$ are $\psi$-invariant, the set
$$\{\mathcal L\in\lambda(C_5):\mathcal L*\mathcal L=\mathcal Z,\;\mathcal L* \Lambda=\Theta+1,\;\mathcal L* 2\Lambda=2\Theta+2\}=\{\Gamma\}$$
is $\psi$-invariant. So, the element $\Gamma$ is $\psi$-invariant.
By analogy we can prove that the elements $-\Gamma$, $2\Gamma$, $-2\Gamma$ are $\psi$-invariant.

Therefore, $\psi(\mathcal L)=\mathcal L$ for all $\mathcal L\in T_{17}$ and $\psi$ is the identity automorphism of $\lambda(C_5)$.
\smallskip

2) $\psi(\Lambda)\ne \Lambda$. Then $\psi(\Lambda)=2\Lambda$, $\psi(2\Lambda)=\Lambda$, and by the first case, $\psi\circ\psi$ is the identity automorphism of $\lambda(C_5)$. It follows that $$\psi(\{\Delta,\Lambda_3,-\Lambda_3\})=\psi(\sqrt{\Lambda}\setminus\{\Lambda\})=
\sqrt{2\Lambda}\setminus\{2\Lambda\}=\{2\Delta,2\Lambda_3,-2\Lambda_3\}$$
and
$$\psi(\{\Theta,\Theta{-}1,\Theta{+}1\}){=}\psi(\{2\Delta,2\Lambda_3,{-}2\Lambda_3\}{*} \{\Lambda\})=\{\Delta,\Lambda_3,{-}\Lambda_3\}{*}\{2\Lambda\}=
\{2\Theta,2\Theta{+}2,2\Theta{-}2\}.$$ Then
$\psi(\{\Theta+1,\Theta,\Theta+2\})=
\psi(\{\Theta,\Theta-1,\Theta+1\}+1)=\{2\Theta,2\Theta+2,2\Theta-2\}+1=\{2\Theta+1,2\Theta-2,2\Theta-1\}$ and hence $\psi(\{\Theta,\Theta+1\})=\psi(\{\Theta,\Theta-1,\Theta+1\}
\cap\{\Theta+1,\Theta,\Theta+2\})=
\{2\Theta,2\Theta+2,2\Theta-2\}\cap\{2\Theta+1,2\Theta-2,2\Theta-1\}=\{2\Theta-2\}$, which contradicts the bijectivity of $\psi$. So, the case $\psi(\Lambda)\ne\Lambda$ is impossible and by the first case, $\psi$ is the identity automorphism of $\lambda(C_5)$.

Therefore, $\Aut(\lambda(C_5))=\Aut(C_5)\cong C_4$.
\end{proof}

\subsection{The final table}

We summarize the obtained results on the automorphism groups $\Aut(\lambda(G))$
of super\-exten\-sions of groups $G$ of cardinality $|G|\leq 5$ in the following table:
\smallskip

%\begin{table}[h]
\begin{center}
\begin{tabular}{|c|cccccc|}
\hline
$G$ & $C_1$ & $C_2$ & $C_3$ & $C_4$ & $C_2\times C_2$&$C_5$\\
\hline
$\Aut(G)$ & $C_1$& $C_1$ & $C_2$ & $C_2$ & $S_3$ &$C_4$\\
\hline
$\Aut(\lambda(G))$ & $C_1$ & $C_1$ & $C_2$ & $C_2\times C_2$ & $S_4$ &$C_4$\\
%$\Hol(G)$& $C_1$ &$C_2$ & $S_3$ & $D_8$ & $S_4$ & $\mathsf{Aff}(\mathbb F_5)$\\
\hline
\end{tabular}
\end{center}
\medskip

Analyzing the entries of this table, we can ask the following question.

\begin{problem} Is  $\Aut(\lambda(G))$ isomorphic to $\Aut(G)$ for a finite (cyclic) group $G$ of odd order?
\end{problem}

%{ \footnotesize


\begin{thebibliography}{10}


\bibitem{BG2} T.~Banakh, V.~Gavrylkiv, {\em Algebra in superextension of groups, II:
cancelativity and centers}, Algebra Discrete Math., {\bf 4} (2008),  1--14.

\bibitem{BG3} T.~Banakh, V.~Gavrylkiv, {\em Algebra in superextension of groups:
minimal left ideals}, Mat. Stud., {\bf 31}(2) (2009), 142--148.

\bibitem{BG3b} T.~Banakh, V.~Gavrylkiv, {\em Extending binary operations to
functor-spaces}, {Carpathian Math. Publ.}, \textbf{1}(2) (2009),  113--126.

\bibitem{BG4} T.~Banakh, V.~Gavrylkiv, {\em Algebra in the superextensions of twinic groups}, Dissertationes Math., {\bf 473} (2010), 3--74.

\bibitem{BG5} T.~Banakh, V.~Gavrylkiv, {\em Algebra in superextensions of
semilattices}, Algebra Discrete Math., {\bf 13}(1) (2012),  26--42.

\bibitem{BG6} T.~Banakh, V.~Gavrylkiv, {\em Algebra in superextensions of
inverse semigroups}, Algebra Discrete Math., {\bf 13}(2) (2012), 147--168.


\bibitem{BG7} T.~Banakh, V.~Gavrylkiv, {\em Characterizing semigroups with commutative superextensions},
{Algebra Discrete Math.}, {\bf 17}(2) (2014), 161--192.

\bibitem{BG8} T.~Banakh, V.~Gavrylkiv, {\em On structure of the semigroups of k-linked upfamilies on groups},
Asian-European J. Math. {\bf 10}(4) (2017),  1750083 [15 pages].



%\bibitem{BG10} T.~Banakh, V.~Gavrylkiv, {\em Automorphism groups of superextensions of  finite monogenic semigroups}, preprint.


\bibitem{BGN} T.~Banakh, V.~Gavrylkiv, O.~Nykyforchyn, {\em Algebra in superextensions of groups, I: zeros and commutativity},
Algebra Discrete Math., {\bf 3} (2008), 1--29.


\bibitem{BMMV} A.E.~Brouwer, C.F.~Mills, W.H.~Mills, A.~Verbeek, {\em Counting families of mutually intesecting sets}, Electron. J. Combin., {\bf 20}(2) (2013), Paper 8, 8 pp.


%\bibitem{CP} A.H.~Clifford, G.B.~Preston, {\em The algebraic theory of semigroups},
%Vol. {\bf I}, Mathematical Surveys,  Vol. {\bf 7}, (AMS, Providence, RI, 1961).

%\bibitem{CP2} A.H.~Clifford, G.B.~Preston, { The algebraic theory of semigroups}. Vol. II.,
%Mathematical Surveys. {\bf 7}. AMS, Providence, RI, 1967.

%\bibitem[CHK]{CHK} J.H.~Carruth, J.A.~Hildebrant, R.J.~Koch.  The theory of topological semigroups, Marcel Dekker, 1983.

%\bibitem{De} R.~Dedekind,  \"Uber Zerlegungen von Zahlen durch ihre gr\"ussten gemeinsammen Teiler,
%in {\em Gesammelte Werke}, Vol. {\bf 1} (Springer, 1897),  103--148.

%\bibitem{DJ} F.~Diego, K.H.~Jonsdottir, {\em Associative Operations on a Three-Element Set}, TMME {\bf 5}:2-3 (2008), 257--268.

%\bibitem{Ellis} R.~Ellis, {\em Distal transformation groups}, Pacific J.~Math. {\bf 8} (1958), 401--405.

%\bibitem{Ellis} R.~Ellis, {\em lectures on topological dynamics}, Benjamin, New York, 1969.

\bibitem{G1} V.~Gavrylkiv, {\em The spaces of inclusion hyperspaces over noncompact spaces}, Mat. Stud., {\bf 28}(1) (2007), 92--110.

\bibitem{G2} V.~Gavrylkiv, {\em Right-topological semigroup operations on inclusion hyperspaces},
 Mat. Stud., {\bf 29}(1) (2008),  18--34.

\bibitem{G3} V.~Gavrylkiv, {\em On representation of semigroups of inclusion hyperspaces}, {Carpathian Math. Publ.}, \textbf{2}(1) (2010),  24--34.

%\bibitem{G3} V.~Gavrylkiv, {\em Monotone families on cyclic semigroups}, Precarpathian Bull. Shevchenko Sci. Soc. {\bf 17}:1 (2012),  35--45.

\bibitem{G4} V.~Gavrylkiv, {\em Superextensions of cyclic semigroups}, Carpathian Math. Publ., \textbf{5}(1) (2013), 36--43.


%\bibitem{G5} V.~Gavrylkiv, {\em Semigroups of linked upfamilies}, Precarpathian Bull. Shevchenko Sci. Soc. {\bf 29}:1 (2015), 104--112.

\bibitem{G6} V.~Gavrylkiv, {\em Semigroups of centered upfamilies on finite monogenic semigroups},
J. Algebra, Number Theory: Adv. App.,  {\bf 16}(2) (2016),  71--84.

\bibitem{G7} V.~Gavrylkiv, {\em Semigroups of centered upfamilies on groups}, Lobachevskii J. Math., \textbf{38}(3) (2017),  420--428.


\bibitem{G8} V.~Gavrylkiv, {\em Superextensions of three-element semigroups}, {Carpathian Math. Publ.}, {\bf 9}(1) (2017), 28--36.

\bibitem{G9} V.~Gavrylkiv, {\em On the automorphism group of the superextension of a semigroup}, Mat. Stud., {\bf 48}(1) (2017).

%\bibitem{GRS} R.~Graham, B.~Rothschild, J.~Spencer, {Ramsey theory}, John Wiley \&\ Sons, Inc., New York, 1990.

%\bibitem[H1]{Hind} N.Hindman, {\em Finite sums from  sequences within cells of partition of $\IN$} // J.~Combin. Theory Ser. A {\bf 17} (1974), 1--11.

%\bibitem[H2]{H2} N.Hindman, {\em Ultrafilters and combinatorial number theory} // Lecture Notes in Math. {\bf 751} (1979), 49--184.

%\bibitem{HLS} N.~Hindman, L.~Legette, D.~Strauss, {\em The number of minimal left and minimal right ideals in $\beta S$}, Topology Proc. {\bf 39 }(2012), 45--68.

\bibitem{HS} N.~Hindman, D.~Strauss, {\em Algebra in the Stone-\v Cech compactification} (de Gruyter, Berlin, New York, 1998).

\bibitem{Howie} J.M.~Howie, {\em Fundamentals of semigroup theory}, The Clarendon Press (Oxford University Press, New York, 1995).

\bibitem{vM} J.~van Mill, {\em Supercompactness and Wallman spaces}, Mathematical Centre Tracts, Vol. {\bf 85} (Amsterdam, 1977).


%\bibitem{Pet} M.~Petrich, {Inverse semigroups}, John Wiley \&\ Sons, Inc., New York, 1984.

%\bibitem{P} I.V.~Protasov, Combinatorics of Numbers, VNTL, Lviv, 1997.

%\bibitem{Ramsey} F.~Ramsey, {\em On a problem of formal logic}, Proc. London Math. Soc. {\bf 30} (1930), 264--286.

%\bibitem[OA]{OA} L.A.Skorniakov et al., General Algebra, Nauka, Moscow, 1990 (in Russian).

\bibitem{R} D.~Robinson, {\em A course in the theory of groups}, Graduate Texts in Mathematics, Vol. {\bf 80} (Springer-Verlag, New York, 1996).

\bibitem{TZ} A.~Teleiko, M.~Zarichnyi, {\em Categorical Topology of Compact Hausdofff Spaces}, Vol. {\bf 5} (VNTL, Lviv, 1999).

\bibitem{Ve} A.~Verbeek, {\em Superextensions of topological spaces}, Mathematical Centre Tracts, Vol. {\bf 41} (Amsterdam, 1972).


\end{thebibliography}
\end{document}